   \def\MR#1{}
\begin{document}

\newtheorem{theorem}{Theorem}
\newtheorem{lemma}[theorem]{Lemma}
\newtheorem{claim}[theorem]{Claim}
\newtheorem{cor}[theorem]{Corollary}
\newtheorem{proposition}[theorem]{Proposition}
\newtheorem{definition}{Definition}
\newtheorem{question}[theorem]{Question}
\newtheorem{remark}[theorem]{Remark}
\newcommand{\hh}{{{\mathrm h}}}

\numberwithin{equation}{section}
\numberwithin{theorem}{section}
\numberwithin{table}{section}

\def\sssum{\mathop{\sum\!\sum\!\sum}}
\def\ssum{\mathop{\sum\ldots \sum}}
\def\dsum{\mathop{\sum \sum}}
\def\iint{\mathop{\int\ldots \int}}

\def\squareforqed{\hbox{\rlap{$\sqcap$}$\sqcup$}}
\def\qed{\ifmmode\squareforqed\else{\unskip\nobreak\hfil
\penalty50\hskip1em\null\nobreak\hfil\squareforqed
\parfillskip=0pt\finalhyphendemerits=0\endgraf}\fi}

\newfont{\teneufm}{eufm10}
\newfont{\seveneufm}{eufm7}
\newfont{\fiveeufm}{eufm5}
%
%
\newfam\eufmfam
     \textfont\eufmfam=\teneufm
\scriptfont\eufmfam=\seveneufm
     \scriptscriptfont\eufmfam=\fiveeufm
%
%
\def\frak#1{{\fam\eufmfam\relax#1}}

\newcommand{\bflambda}{{\boldsymbol{\lambda}}}
\newcommand{\bfmu}{{\boldsymbol{\mu}}}
\newcommand{\bfxi}{{\boldsymbol{\xi}}}
\newcommand{\bfrho}{{\boldsymbol{\rho}}}

\def\fK{\mathfrak K}
\def\fT{\mathfrak{T}}

\def\fA{{\mathfrak A}}
\def\fB{{\mathfrak B}}
\def\fC{{\mathfrak C}}

\def\E{\mathsf {E}}

\def \balpha{\bm{\alpha}}
\def \bbeta{\bm{\beta}}
\def \bgamma{\bm{\gamma}}
\def \blambda{\bm{\lambda}}
\def \bchi{\bm{\chi}}
\def \bphi{\bm{\varphi}}
\def \bpsi{\bm{\psi}}

\def\eqref#1{(\ref{#1})}

\def\vec#1{\mathbf{#1}}


\def\cA{{\mathcal A}}
\def\cB{{\mathcal B}}
\def\cC{{\mathcal C}}
\def\cD{{\mathcal D}}
\def\cE{{\mathcal E}}
\def\cF{{\mathcal F}}
\def\cG{{\mathcal G}}
\def\cH{{\mathcal H}}
\def\cI{{\mathcal I}}
\def\cJ{{\mathcal J}}
\def\cK{{\mathcal K}}
\def\cL{{\mathcal L}}
\def\cM{{\mathcal M}}
\def\cN{{\mathcal N}}
\def\cO{{\mathcal O}}
\def\cP{{\mathcal P}}
\def\cQ{{\mathcal Q}}
\def\cR{{\mathcal R}}
\def\cS{{\mathcal S}}
\def\cT{{\mathcal T}}
\def\cU{{\mathcal U}}
\def\cV{{\mathcal V}}
\def\cW{{\mathcal W}}
\def\cX{{\mathcal X}}
\def\cY{{\mathcal Y}}
\def\cZ{{\mathcal Z}}
\newcommand{\rmod}[1]{\: \mbox{mod} \: #1}

\def\cg{{\mathcal g}}

\def\e{{\mathbf{\,e}}}
\def\ep{{\mathbf{\,e}}_p}
\def\eq{{\mathbf{\,e}}_q}

\def\Tr{{\mathrm{Tr}}}
\def\Nm{{\mathrm{Nm}}}

\def\rE{{\mathrm{E}}}
\def\rT{{\mathrm{T}}}

 \def\SS{{\mathbf{S}}}

\def\lcm{{\mathrm{lcm}}}

\def\t{\tilde}
\def\ov{\overline}

\def\({\left(}
\def\){\right)}
\def\l|{\left|}
\def\r|{\right|}
\def\fl#1{\left\lfloor#1\right\rfloor}
\def\rf#1{\left\lceil#1\right\rceil}
\def\flq#1{\langle #1 \rangle_q}

\def\mand{\qquad \mbox{and} \qquad}

\newcommand{\commIg}[1]{\marginpar{%
\begin{color}{magenta}
\vskip-\baselineskip 
\raggedright\footnotesize
\itshape\hrule \smallskip Ig: #1\par\smallskip\hrule\end{color}}}

\newcommand{\commSi}[1]{\marginpar{%
\begin{color}{blue}
\vskip-\baselineskip 
\raggedright\footnotesize
\itshape\hrule \smallskip Si: #1\par\smallskip\hrule\end{color}}}




\hyphenation{re-pub-lished}

\mathsurround=1pt

\def\bfdefault{b}
\overfullrule=5pt

\def \F{{\mathbb F}}
\def \K{{\mathbb K}}
\def \Z{{\mathbb Z}}
\def \Q{{\mathbb Q}}
\def \R{{\mathbb R}}
\def \C{{\\mathbb C}}
\def\Fp{\F_p}
\def \fp{\Fp^*}

\def\Smn{S_{k,\ell,q}(m,n)}

\def\Kmn{\cK_p(m,n)}
\def\psmn{\psi_p(m,n)}

\def\SM{\cS_{k,\ell,q}(\cM)}
\def\SMN{\cS_{k,\ell,q}(\cM,\cN)}
\def\SAMN{\cS_{k,\ell,q}(\cA;\cM,\cN)}
\def\SABMN{\cS_{k,\ell,q}(\cA,\cB;\cM,\cN)}

\def\SIJq{\cS_{k,\ell,q}(\cI,\cJ)}
\def\SAJq{\cS_{k,\ell,q}(\cA;\cJ)}
\def\SABJq{\cS_{k,\ell,q}(\cA, \cB;\cJ)}

\def\sM{\cS_{k,q}^*(\cM)}
\def\sMN{\cS_{k,q}^*(\cM,\cN)}
\def\sAMN{\cS_{k,q}^*(\cA;\cM,\cN)}
\def\sABMN{\cS_{k,q}^*(\cA,\cB;\cM,\cN)}

\def\sIJq{\cS_{k,q}^*(\cI,\cJ)}
\def\sAJq{\cS_{k,q}^*(\cA;\cJ)}
\def\sABJq{\cS_{k,q}^*(\cA, \cB;\cJ)}
\def\sABJp{\cS_{k,p}^*(\cA, \cB;\cJ)}

 \def \xbar{\overline x}

\author[S.  Macourt] {Simon Macourt}
\address{Department of Pure Mathematics, University of New South Wales,
Sydney, NSW 2052, Australia}
\email{s.macourt@student.unsw.edu.au}

\title{Decomposition of subsets of finite fields}

\begin{abstract} 
We extend a bound of Roche-Newton, Shparlinski and Winterhof which says any subset of a finite field can be decomposed into two disjoint subset $\cU$ and $\cV$ of which the additive energy of $\cU$ and $f(\cV)$ are small, for suitably chosen rational functions $f$. We extend the result by proving equivalent results over multiplicative energy and the additive and multiplicative energy hybrids. 
\end{abstract}
\keywords{}
\subjclass[2010]{}

\maketitle

\section{Introduction}
\subsection{Background}
Let $\F_q$ denote the finite field of $q$ elements of characteristic $p$.

Given two sets $\cU, \cV \subset \F_q$ we define their sum and product sets as
\begin{align*}
\cU + \cV = \{u+v : u\in \cU, v\in \cV\} \quad \text{and} \quad \cU \cdot \cV = \{uv : u\in \cU, v\in \cV\}.
\end{align*}
We define the additive and multiplicative energy of a set as follows
\begin{align*}
E^+(\cU)&= \#\{(u_1,u_2,u_3,u_4) \in \cU^4 : u_1+u_2=u_3+u_4\} \\
E^\times(\cU) &=\#\{(u_1,u_2,u_3,u_4) \in \cU^4 : u_1u_2=u_3u_4\}.
\end{align*}
We mention the sum-product problem which suggests that at least one of the sets $\cU + \cV$ and $\cU \cdot \cV$ must be large. This problem has been studied extensively in recent years, see \cite{BKT}. There is a natural relation to the sum-product problem to bounds on additive and multiplicative energy. For example, by applying the Cauchy-Schwarz inequality one can see that
\begin{align*}
E^\times(\cU) \ge \frac{|\cU|^4}{|\cU\cdot \cU|},
\end{align*}
and similarly for additive results. It follows that strong upper bounds on energy results correspond to strong lower bounds on the relevant sum-product estimate and vice-versa.

Balog and Wooley \cite{BalWoo} proved that in finite fields the set $\cU$ can be decomposed into a disjoint union of subsets $\cV$ and $\cW$ such that $E^+(\cV)$ and $E^\times(\cW)$ are both small. These results have been improved on by Konyagin and Shkredov \cite{KonShk} and Rudnev, Shkredov and Stevens \cite{RSS}.

Our main results are an extension of \cite{R-NSW}, which themselves are a generalisation of the Balog-Wooley decomposition \cite[Theorem 1.3]{BalWoo}. Here we extend the results of \cite{R-NSW} to multiplicative energy and the hybrid cases of both additive and multiplicative energy.

\subsection{Notation}
For $a \in \F_q$ and a rational function $f \in \F_q(X)$ we use $r^+_{\cU,\cV}(f,a)$ to denote the number of solutions to $f(u) + f(v)=a, (u,v) \in \cU\times \cV$. Similarly we use $r^\times_{\cU,\cV}(f,a)$ to denote the number of solutions to $f(u)f(v)=a$. If $\cU=\cV$ we write $r^+_{\cU}(f,a)$ and if $f(X)=X$ we write $r^+_{\cU,\cV}(a)$.

For this paper we use the convention that capital letters in italics, such as $\cU$, will be used to represent sets. Corresponding capital letters in roman will denote their cardinalities, such as $U=|\cU|$. We also use $\cX$ and $\Psi^*$ to denote the sets of additive and multiplicative characters respectively, and we will use the lower case $\chi$ and $\psi$ to represent their respective characters.

Throughout the paper we use the notation $A \ll B$ to indicate \linebreak[4] $|A| \le c|B|$ for some absolute constant $c$. We also use the notation $A \ll_k B$ for when the constant $c$ depends on some parameter $k$. We also equivalently write $A=O(B)$ and $A=O_k(B)$.

\subsection{Main Results}
Here we extend the result of \cite[Theorem 1.1]{R-NSW} to multiplicative energy and a hybrid of additive and multiplicative energies.
\begin{theorem}\label{thm:E(S)Ef(T)}
For any set $\cA \subset \F^*_q$ and any rational function $f \in \F_q(X)$ of degree $k$ which is not of the form $f(X)=rg(X)^dX^\lambda$ where $d | q-1$, there exist disjoint sets $\cS, \cT \subset \cA$ such that $\cA=\cS\cup\cT$ and 
\begin{align*}
\max\{E^\times(\cS), E^\times(f(\cT))\} \ll_k \frac{A^3}{M(A)}
\end{align*}
where 
\begin{align*}
M(\cA) = \min\left\{\frac{q^{1/2}}{A^{1/2}(\log A)^{11/4}}, \frac{A^{4/5}}{q^{2/5}(\log A)^{31/10}}\right\}.
\end{align*}
\end{theorem}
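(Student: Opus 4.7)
The plan is to transport the argument of \cite[Theorem 1.1]{R-NSW} from the additive to the multiplicative setting, replacing additive characters by multiplicative characters throughout. The crucial analytic input will be a hybrid character sum estimate of the shape
\[
\left|\sum_{x \in \F^*_q} \psi_1(x)\,\psi_2(f(x))\right| \ll_k q^{1/2},
\]
valid for all pairs of multiplicative characters $(\psi_1,\psi_2)$ which are not simultaneously trivial on the monomial and rational function parts. The non-degeneracy hypothesis on $f$ --- that $f(X)$ is not of the form $rg(X)^d X^\lambda$ with $d \mid q-1$ --- is precisely the condition ensuring that $\psi_1(x)\psi_2(f(x))$ does not collapse to a power-residue character, so that Weil's bound delivers square-root cancellation.

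For the decomposition itself I would imitate the popularity splitting of \cite{R-NSW}. For each $x \in \cA$, introduce a hybrid popularity weight of the form
\[
T(x) = \#\{(a,b,c) \in \cA^3 : f(x)\,f(a) = f(b)\,f(c)\},
\]
so that $\sum_{x \in \cA} T(x)$ is comparable, up to a factor depending on $k$, to $E^\times(f(\cA))$. Declare $\cT \subseteq \cA$ to be the set of $x$ on which $T(x)$ exceeds a threshold $\tau$ to be chosen, and let $\cS = \cA \setminus \cT$. To control $E^\times(f(\cT))$ I would combine the size bound $|\cT| \le \tau^{-1}\sum_x T(x)$ from Markov with a Cauchy--Schwarz argument; to control $E^\times(\cS)$ I would expand it by orthogonality of multiplicative characters, peel off the principal character contribution of order $A^3/(q-1)$, and estimate the non-principal terms using the hybrid Weil bound together with the popularity cap that defines $\cS$. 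Dyadic pigeonholing over the level sets of the relevant representation function is what introduces the logarithmic losses $(\log A)^{11/4}$ and $(\log A)^{31/10}$ appearing in $M(\cA)$.

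The main obstacle is the calibration step: the threshold $\tau$ must be tuned so that the combinatorial bound on $E^\times(\cS)$ and the analytic bound on $E^\times(f(\cT))$ are of the same order, and the optimal choice depends on which regime of $A$ versus $q$ one is in. The two branches of the minimum defining $M(\cA)$ correspond to these regimes: the term $q^{1/2}/(A^{1/2}(\log A)^{11/4})$ is Weil-driven and dominates when $A$ is small relative to $q$, while $A^{4/5}/(q^{2/5}(\log A)^{31/10})$ is of combinatorial origin and dominates when $A$ is large. Matching the two optimisations cleanly and tracking the logarithmic losses from dyadic pigeonholing through the whole argument is the principal technical task; the hybrid Weil bound and the Balog--Wooley style popularity split should otherwise proceed without essentially new conceptual difficulties compared to \cite{R-NSW}.
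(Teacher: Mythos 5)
Your analytic input is broadly the right Weil-type ingredient, but your decomposition mechanism has a genuine gap: a one-shot popularity split on $T(x)=\#\{(a,b,c)\in\cA^3: f(x)f(a)=f(b)f(c)\}$ cannot control either of the two quantities the theorem asks for. First, placing the \emph{high}-popularity points into $\cT$ is backwards: those are exactly the points contributing heavily to the image energy, and Markov only bounds the cardinality $|\cT|\le \tau^{-1}\sum_{x\in\cA} T(x)$, which yields nothing better than the trivial $E^\times(f(\cT))\le |\cT|^3$. Second, and more fundamentally, the cap $T(x)\le\tau$ defining $\cS$ is a statement about $f$-values and gives no handle whatsoever on $E^\times(\cS)$, the energy of $\cS$ itself; your plan to expand $E^\times(\cS)$ by orthogonality and estimate the non-principal terms "using the hybrid Weil bound" fails because there is no cancellation in $\sum_{s\in\cS}\chi(s)$ for an arbitrary subset $\cS\subset\F_q^*$ --- the non-principal sums can have size comparable to $S$ (take $\cS$ a union of cosets of a small multiplicative subgroup, or a geometric progression, which has $E^\times(\cS)\gg S^3$ even though every $T(x)$ can be small for generic $f$). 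No choice of the threshold $\tau$ rescues this.

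The paper's route (following R-NSW) couples the two energies by a different mechanism, and it is iterative rather than one-shot. The key Lemma \ref{lem:UEfU} shows: if $E^\times(\cA)$ is \emph{large}, then $\cA$ contains a \emph{large} subset $\cU$, with $U\gg E^\times(\cA)^{1/2}A^{-1/2}(\log A)^{-7/4}$, for which $E^\times(f(\cU))$ is \emph{small}. Its proof dyadically pigeonholes the level sets of $r^\times_\cA$ to manufacture many multiplicative representations $r^\times_{\cS,\cA^{-1}}(x)\ge u$ for $x\in\cU$, and these representations are what feed the character-sum machinery: the popular-value bound of Lemma \ref{lem:rtimes} rests on the count of solutions to $f(uv)=yz$ (Lemma \ref{lem:Jtimes}), which in turn needs the \emph{bilinear} estimate of Lemma \ref{lem:Chartimes}, namely $\sum_{u\in\cU}\sum_{v\in\cV}\chi(f(uv))\psi(uv)\ll_k\sqrt{UVq}$ over arbitrary sets $\cU,\cV$ --- your complete-sum bound $\sum_{x}\psi_1(x)\psi_2(f(x))\ll_k q^{1/2}$ is only the inner ingredient of that lemma, not a substitute for it. One then iterates: while the current remainder has multiplicative energy exceeding $A^3/M(A)$, extract such a subset $\cU_i$ into $\cT$; the stopping rule makes $E^\times(\cS)\le A^3/M(A)$ automatic for the final remainder $\cS$, and the image energy of the union $\cT=\bigcup_i\cU_i$ is assembled via Lemma \ref{lem:Etimes}, $E^\times\(\bigcup_i\cA_i\)\le\(\sum_i E^\times(\cA_i)^{1/4}\)^4$. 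If you want to salvage your write-up, replace the threshold split by this extraction loop; the two branches of $M(\cA)$ then come out of balancing the two terms in the bound of Lemma \ref{lem:UEfU}, not out of tuning a popularity threshold $\tau$.
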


\begin{theorem}\label{thm:ExE+}
For any set $\cA \subset \F^*_q$ and any rational function $f \in \F_q(X)$ of degree $k$ which is not of the form $f(X)=g(X^p)-g(X)+\lambda X+\mu$, there exist disjoint sets $\cS, \cT \subset \cA$ such that $\cA=\cS\cup\cT$ and 
\begin{align*}
\max\{E^\times(\cS), E^+(f(\cT))\} \ll_k \frac{A^3}{M(A)}.
\end{align*}
\end{theorem}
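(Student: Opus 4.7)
The plan is to follow the pruning strategy used in the proof of \cite[Theorem 1.1]{R-NSW} and of Theorem~\ref{thm:E(S)Ef(T)} above, but now running the two removal steps against different energies: a multiplicative one on $\cS$ and an additive one on $f(\cT)$. Writing $\cS = \cA \setminus \cT$, everything reduces to choosing $\cT \subset \cA$. The key observation is that for thresholds $K_1$ and $K_2$, once we can arrange $r^\times_{\cS}(a) \le K_1$ and $r^+_{f(\cT)}(a) \le K_2$ for every $a$, the trivial inequalities $E^\times(\cS) \le K_1 S^2 \le K_1 A^2$ and $E^+(f(\cT)) \le K_2 T^2 \le K_2 A^2$ give the desired bound, provided we can afford $K_1, K_2 \ll A/M(\cA)$.

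To implement this I would first establish the two character-sum inputs. The condition $f(X) \ne g(X^p) - g(X) + \lambda X + \mu$ is exactly the non-degeneracy needed for Weil's bound on the additive character sums $\sum_{x \in \F_q} \chi(f(x))$, giving square-root cancellation and, by the standard Parseval/completion argument of \cite{R-NSW}, an upper bound on the number of $a$ for which $r^+_{\cA}(f,a)$ is large. On the multiplicative side I would reuse the machinery that drives Theorem~\ref{thm:E(S)Ef(T)}: orthogonality over multiplicative characters expresses $r^\times_{\cA}(a)$ as an average of $\bigl|\sum_{u \in \cA} \psi(u)\bigr|^2$, and the sum-product-type bounds already exploited there control the contribution of the heavy characters. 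Both ingredients produce, for each threshold $K$, a ``heavy-value'' set of cardinality controlled by $q$, $A$ and $K$.

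With the heavy-value bounds in hand I would run the greedy decomposition of \cite{R-NSW} in the hybrid form: starting from $\cT = \cA$, iteratively remove from $\cT$ any element participating in a heavy additive equation $f(u)+f(v)=a$, placing it in $\cS$; in parallel, remove from $\cS$ any element that would create a heavy multiplicative equation $uv=a$ and return it to $\cT$. Each move is charged to a distinct heavy value, so the bad-set bounds from the previous step cap the total number of moves by $O(A/M(\cA))$. At termination both representation functions are bounded by the chosen threshold, and balancing $K_1$ and $K_2$ against the two character-sum inputs reproduces the same $M(\cA)$ as in \cite{R-NSW}. The main obstacle I expect is coordinating the two sides of the greedy procedure: unlike the purely additive case, one rule drains $\cT$ towards $\cS$ and the other does the reverse, so I must verify that the process terminates (rather than oscillates) and that each removed element can be uniquely charged to a heavy value of additive or multiplicative type; the standard remedy is to apply the heavy-value estimates to the full set $\cA$ once, rather than to the evolving $\cS$ and $\cT$, so that the budget of removals is fixed at the outset.
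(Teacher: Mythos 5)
Your proposal does not follow the paper's route, and it contains a genuine gap. The paper proves Theorem \ref{thm:ExE+} by combining Corollary \ref{cor:ExE+} (the hybrid analogue of Lemma \ref{lem:UEfU}) with the iterative extraction argument of \cite[Theorem 1.1]{R-NSW}: as long as the current set has multiplicative energy above the threshold $A^3/M(A)$, one extracts a sizeable subset $\cU$ with $E^+(f(\cU))$ small, takes $\cT$ to be the union of the extracted pieces --- assembled using the fourth-root sub-additivity of energy as in Lemma \ref{lem:Etimes} --- and stops when the residual set $\cS$ has $E^\times(\cS)$ below threshold. Elements move in one direction only, so there is no termination issue, and no pointwise thresholding of representation functions occurs anywhere.

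The fatal step in your plan is the claim that a ``standard Parseval/completion argument'' yields heavy-value bounds for $r^+_{\cA}(f,a)$ and $r^\times_{\cA}(a)$ on the full set $\cA$. Expanding these by characters produces incomplete sums such as $\sum_{u\in\cA}\chi(f(u))$ over an arbitrary set $\cA$, to which the Weil bound does not apply and which in general exhibit no cancellation; Parseval merely returns the energy you are trying to bound, so the argument is circular. The paper's large-value estimate, Lemma \ref{lem:rtimes}, works only under the popularity hypothesis $r^\times_{\cS,\cA^{-1}}(x)\ge u$ for all $x\in\cU$, and the entire purpose of the two dyadic pigeonholings inside Lemma \ref{lem:UEfU} is to manufacture such a set $\cU$: the substitution of a popular product $x=vw^{-1}$ with $(v,w)\in\cS\times\cA$ converts the incomplete sum over $\cU$ into a bilinear sum amenable to Lemma \ref{lem:Chartimes}. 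Since your ``budget fixed at the outset'' has no valid input, the greedy scheme also fails on its own terms: a single heavy value $a$ involves many elements and a single element participates in many heavy equations, so removals cannot be injectively charged to heavy values, and with moves running in both directions nothing prevents oscillation. Moreover, enforcing pointwise bounds $r^\times_{\cS}(a)\le K_1$ and $r^+_{f(\cT)}(a)\le K_2$ with $K_1,K_2\ll A/M(A)$ is strictly stronger than the theorem's conclusion --- if $\cA$ is close to a geometric progression, every element lies in many popular products, so your first rule would drain $\cS$ essentially completely --- and the crude bound $E\le KA^2$ would in any case not reproduce the exponents in $M(\cA)$, which arise from balancing the two terms of \eqref{eq:Etimes2} across the dyadic level sets of $r^\times_{\cU}(f,\cdot)$.
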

\begin{theorem} \label{thm:E+Ex}
For any set $\cA \subset \F^*_q$ and any rational function $f \in \F_q(X)$ of degree $k$ which is not of the form $f(X)=rg(X)^dX^\lambda$ where $d | q-1$, there exist disjoint sets $\cS, \cT \subset \cA$ such that $\cA=\cS\cup\cT$ and 
\begin{align*}
\max\{E^+(\cS), E^\times(f(\cT))\} \ll_k \frac{A^3}{M(A)}.
\end{align*}
\end{theorem}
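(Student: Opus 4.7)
The plan is to prove Theorem~\ref{thm:E+Ex} by mirroring the arguments of Theorems~\ref{thm:E(S)Ef(T)} and~\ref{thm:ExE+}, interchanging the roles of addition and multiplication relative to Theorem~\ref{thm:ExE+}. The hypothesis on $f$ in Theorem~\ref{thm:E+Ex} is identical to that of Theorem~\ref{thm:E(S)Ef(T)}, so the multiplicative character sum input underpinning Theorem~\ref{thm:E(S)Ef(T)} is exactly what is needed here to control $E^\times(f(\cT))$.

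I would construct the decomposition via an \emph{additive} popularity weight. For $a \in \F_q$, set $\sigma(a) = \#\{(u,v,w) \in \cA^3 : u+v-w = a\}$, so that $\sum_{a \in \F_q} \sigma(a) = A^3$ and $\sum_{a \in \cA} \sigma(a) = E^+(\cA)$. For a threshold $\tau$ to be optimised later, put $\cT = \{a \in \cA : \sigma(a) \ge \tau\}$ and $\cS = \cA \setminus \cT$. Since for every $a \in \cS$ the count $\#\{(s_1,s_2,s_3) \in \cS^3 : s_1+s_2-s_3 = a\}$ is at most $\sigma(a) < \tau$, summation yields $E^+(\cS) \le \tau A$; at the same time, $\tau T \le \sum_{a \in \cT} \sigma(a) \le A^3$ gives $T \le A^3/\tau$.

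For $E^\times(f(\cT))$, expand as a fourth moment of multiplicative character sums. Writing $S(\psi) = \sum_{t \in \cT} \psi(f(t))$ and isolating the trivial character yields
\begin{align*}
E^\times(f(\cT)) \ll \frac{T^4}{q} + kT \max_{\psi \ne \psi_0} |S(\psi)|^2,
\end{align*}
where the second-moment bound $\sum_\psi |S(\psi)|^2 \ll (q-1)kT$ follows from the fact that $f$ has degree $k$. The core step is the bound on $|S(\psi)|$, and here one exploits the additive popularity of $\cT$: after a dyadic decomposition of $\cT$ according to the level sets of $\sigma$, one arrives (up to a logarithmic loss) at
\begin{align*}
\tau |S(\psi)| \ll \Biggl|\sum_{(u,v,w) \in \cA^3} 1_\cT(u+v-w)\,\psi(f(u+v-w))\Biggr|,
\end{align*}
to which Cauchy--Schwarz in one variable is applied so as to drop the indicator and reduce to character sums of the shape $\sum_x \psi(f(x+h_1)/f(x+h_2))$. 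The hypothesis on $f$ is exactly the condition under which these derived sums admit Weil-type square-root cancellation. Choosing $\tau$ so as to balance $E^+(\cS) \ll \tau A$ against the resulting bound on $E^\times(f(\cT))$ recovers the two regimes encoded in $M(\cA)$.

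The main obstacle is the bookkeeping in this character sum step: carrying the dyadic decomposition through both the Cauchy--Schwarz and the Weil input while tracking the correct powers of $\log A$ (which produce the factors $(\log A)^{11/4}$ and $(\log A)^{31/10}$ in $M(\cA)$), and ensuring that the reduced character sum is of a form to which the hypothesis on $f$ applies. Modulo these technicalities, the argument is a direct mirror image of the proof of Theorem~\ref{thm:E(S)Ef(T)}, with additive and multiplicative structures swapped.
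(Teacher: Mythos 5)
Your one-shot architecture (popularity threshold $+$ direct character-sum amplification) is genuinely different from the paper's, but it has a fatal gap at its central step. The display $\tau |S(\psi)| \ll \bigl|\sum_{(u,v,w)\in\cA^3} \mathbf{1}_{\cT}(u+v-w)\,\psi(f(u+v-w))\bigr|$ is backwards: the right-hand side equals $\bigl|\sum_{t\in\cT}\sigma(t)\psi(f(t))\bigr|$, and one cannot lower-bound an oscillating sum by inserting the varying nonnegative weights $\sigma(t)\ge\tau$ --- the triangle inequality goes the wrong way. Dyadic decomposition does not repair this: within a level set $\{t:\sigma(t)\sim 2^j\tau\}$ the weights still vary by a factor of $2$, so the same obstruction recurs, and Abel summation merely trades it for superlevel sets $\{\sigma\ge s\}$, which is circular. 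A second, related defect: even granting that display, $\mathbf{1}_{\cT}(u+v-w)$ is a function of the sum variable, so it merges with $\psi(f(u+v-w))$ into a single function of $u+v-w$; Cauchy--Schwarz in one variable cannot ``drop the indicator'' without reintroducing the very sum $S(\psi)$ (additively twisted) that you are trying to bound. This is precisely why the paper never estimates $\max_{\psi\ne\psi_0}|S(\psi)|$ over the structured set: it works throughout with nonnegative solution counts, where popularity \emph{can} be exploited --- in Lemma~\ref{lem:rtimes} the count of $x$ with $r^\times_{\cU}(f,x)\ge\tau$ is inflated by substituting the $\ge u$ representations of each element (there $z=vw^{-1}$; in the mixed setting of this theorem, $z=v-w$), which is valid by positivity, and the resulting equation is handled by the counting Lemma~\ref{lem:Jtimes} resting on the Weil input of Lemma~\ref{lem:Chartimes}.

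The paper's proof is also structurally different in a second way: it is iterative, not one-shot. Following the scheme of Roche-Newton--Shparlinski--Winterhof, one applies Corollary~\ref{cor:E+Ex} (the mixed analogue of Lemma~\ref{lem:UEfU}, built from nested dyadic decompositions of the additive energy graph of $\cA$) to extract $\cU_1$ with $E^\times(f(\cU_1))$ small, removes it, and repeats while the remainder has large additive energy; $\cT$ is the union of the extracted pieces, assembled via the sub-additivity $E^\times(\cA_1\cup\cdots\cup\cA_n)\le\bigl(\sum_i E^\times(\cA_i)^{1/4}\bigr)^4$ of Lemma~\ref{lem:Etimes}, and $\cS$ is the remainder; the two regimes in $M(\cA)$ and the exponents $11/4$ and $31/10$ arise from balancing the two terms of Corollary~\ref{cor:E+Ex} through this iteration. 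Your threshold step itself is fine ($E^+(\cS)\le\tau A$ and $T\le A^3/\tau$ are correct), and your scheme could be salvaged by moving the popularity \emph{inside the nonnegative fourth-moment count}: the inequality $E^\times(f(\cT))\le\tau^{-1}\,\#\{(u,v,w,t_2,t_3,t_4)\in\cA^3\times\cT^3: f(u+v-w)f(t_2)=f(t_3)f(t_4)\}$ is legitimate and can then be completed with multiplicative characters --- but whether this one-shot version reproduces the stated logarithmic powers is unclear, and it is not what the paper does. One further inaccuracy: the reduced sums $\sum_x\psi\bigl(f(x+h_1)/f(x+h_2)\bigr)$ degenerate only when $f=rg(X)^d$; the monomial-twist exclusion $f=rg(X)^dX^\lambda$ in the hypothesis is tailored to the multiplicative convolution $f(uv)$ of Lemma~\ref{lem:Chartimes}, so the hypothesis is sufficient for your reduction but not ``exactly'' the condition it requires.
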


\section{Sum-Product}
\subsection{Preliminary Results}
We give a series of lemmas, the proofs of which follow those of \cite{R-NSW} with multiplicative characters replacing additive characters and other equivalent substitutions.
\begin{lemma} \label{lem:Chartimes} 
Let $(\chi, \psi) \in \cX \times \Psi^*$. For any rational function $f \in \F_q(X)$ of degree $k$, if $\chi$ is non-trivial, and not of the form $f(X)=rg(X)^dX^{\lambda}$ where $d$ is the order of $\chi$, if $\psi$ is trivial, we have
\begin{align*}
\sum_{u\in \cU} \sum_{v \in \cV}\chi(f(uv))\psi(uv)\ll_k \sqrt{UVq}.
\end{align*}
\end{lemma}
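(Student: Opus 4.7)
The plan is a standard Cauchy--Schwarz plus Weil-bound reduction for bilinear character sums. Write $S$ for the double sum in the statement.

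First, I would apply the Cauchy--Schwarz inequality in the variable $u$ after extending the outer summation from $u\in\cU$ to $u\in\F_q^*$ (which can only enlarge the modulus), giving
\begin{align*}
|S|^2 \le U \sum_{u\in\F_q^*}\biggl|\sum_{v\in\cV}\chi(f(uv))\psi(uv)\biggr|^2.
\end{align*}
Expanding the square and swapping the order of summation, the identity $\psi(uv_1)\overline{\psi(uv_2)} = \psi(v_1/v_2)$ pulls the multiplicative factor entirely out of the $u$-sum, producing
\begin{align*}
|S|^2 \le U \sum_{v_1,v_2\in\cV}\psi(v_1/v_2)\sum_{u\in\F_q^*}\chi\bigl(f(uv_1)-f(uv_2)\bigr).
\end{align*}

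Next, I would split the outer sum into the diagonal $v_1=v_2$, where the inner complete sum equals $q-1$ and the total contribution is $\ll UVq$, and the off-diagonal $v_1\ne v_2$. For each off-diagonal pair, the substitution $w=uv_1$ (a bijection of $\F_q^*$) together with $\alpha = v_2/v_1 \ne 1$ transforms the inner sum into
\begin{align*}
\sum_{w\in\F_q^*}\chi\bigl(f(w)-f(\alpha w)\bigr),
\end{align*}
a complete character sum of a rational function in $w$ of degree $O_k(1)$. Here Weil's theorem gives $\ll_k \sqrt{q}$ per off-diagonal pair, provided the rational function inside is not of the exceptional form for the pair $(\chi,\psi)$. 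The hypothesis on $f$ in the statement is precisely what rules out this degeneracy for generic $\alpha$.

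Combining diagonal and off-diagonal contributions yields $|S|^2 \ll_k UVq + UV^2\sqrt{q}$, and the symmetric argument (Cauchy--Schwarz in $v$ first) gives $|S|^2 \ll_k UVq + U^2V\sqrt{q}$; the stated bound $|S|\ll_k\sqrt{UVq}$ then follows in the regime where the diagonal term dominates. The main obstacle is the algebraic verification that, under the stated hypothesis on $f$, the rational function $f(X)-f(\alpha X)$ never lies in the Weil-exceptional family for $\alpha\ne 1$; this step tracks the corresponding argument in \cite{R-NSW}, with the substitutions (multiplicative characters in place of additive ones, and the appropriately-modified exceptional form of $f$) that the authors allude to just before the lemma.
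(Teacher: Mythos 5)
There is a genuine gap here, and it is structural rather than cosmetic. Your Cauchy--Schwarz expansion gives at best $|S|^2 \ll_k UVq + \min(U,V)\,UV\sqrt{q}$, and, as you yourself concede, this recovers the claimed $|S|\ll_k\sqrt{UVq}$ only ``in the regime where the diagonal term dominates,'' i.e.\ when $\min(U,V)\le\sqrt{q}$. But the lemma is stated unconditionally and is consumed unconditionally: it feeds into Lemma~\ref{lem:Jtimes}, which in the proof of Lemma~\ref{lem:rtimes} is applied with sets ($\cS$, $\cA$, $\cR$, $f(\cU)$) whose cardinalities can be comparable to $q$ --- exactly the regime where your off-diagonal term $\min(U,V)\,UV\sqrt{q}$ exceeds $UVq$ and your argument (in either symmetric direction, and also the trivial bound $UV$) falls short. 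The paper never expands a square over pairs $(v_1,v_2)$; instead it completes the sum using orthogonality of multiplicative characters,
\begin{align*}
\Sigma = \frac{1}{q-1}\sum_{\lambda=1}^{q-1} \sum_{x\in\F_q}\psi(x)\chi\bigl(f(x)(x^{-1})^{\lambda}\bigr)\sum_{u\in\cU}\chi(u^{\lambda})\sum_{v\in\cV}\chi(v^{\lambda}),
\end{align*}
bounds the complete mixed sum over $x$ by Weil uniformly in $\lambda$ --- this is precisely where the excluded family $f(X)=rg(X)^dX^{\lambda}$, with its monomial factor $X^{\lambda}$, comes from --- and only then applies Cauchy--Schwarz together with Parseval over the whole character family, getting $q^{-1/2}\cdot(q^2UV)^{1/2}=\sqrt{UVq}$ with no constraint on $U$ and $V$. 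The loss in your approach comes from squaring before completing, and cannot be repaired within that framework.

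Separately, your expansion misassigns the characters. The identities $\psi(uv_1)\overline{\psi(uv_2)}=\psi(v_1/v_2)$ and $\chi(f(uv_1))\overline{\chi(f(uv_2))}=\chi\bigl(f(uv_1)-f(uv_2)\bigr)$ require $\psi$ multiplicative and $\chi$ additive. The paper's notation subsection does read that way (it is internally swapped), but the lemma's own hypothesis --- exceptional family $rg(X)^dX^{\lambda}$ with $d$ the order of $\chi$ --- and its invocation in Lemma~\ref{lem:Jtimes} ``with the trivial additive character,'' where the surviving character applied to $f(uv)$ is a nontrivial multiplicative one, make clear that $\chi$ is multiplicative and $\psi$ additive. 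With the correct roles your inner sum becomes $\sum_{u}\chi\bigl(f(uv_1)/f(uv_2)\bigr)\psi\bigl(u(v_1-v_2)\bigr)$: the additive factor does not pull out of the $u$-sum, and the degeneracy to be excluded is that $f(Xv_1)/f(Xv_2)$ be a constant times a $d$-th power, not the Artin--Schreier condition you gesture at. Moreover, even in your own setting, ``generic $\alpha$'' would not suffice: you would need to show that the degenerate ratios $\alpha=v_2/v_1$ number $O_k(1)$ and absorb their $O(q)$ contributions --- fixable, but absent. None of these repairs, however, addresses the quantitative gap described above.
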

\begin{proof}
Let
\begin{align*}
\Sigma = \sum_{u \in \cU} \sum_{v \in \cV}\chi(f(uv))\psi(uv).
\end{align*}
Then,
\begin{align*}
\Sigma &= \sum_{x \in \F_q} \psi(x)\chi(f(x))\frac{1}{q-1}\sum_{\lambda =1}^{q-1} \sum_{u\in \cU} \sum_{v \in \cV} \chi((uvx^{-1})^\lambda) \\
& = \frac{1}{q-1} \sum_{\lambda=1}^{q-1}\sum_{x \in \F_q} \psi(x)\chi(f(x)(x^{-1})^\lambda) \sum_{u\in \cU} \chi(u^\lambda) \sum_{v \in \cV} \chi(v^\lambda).
\end{align*}
By the Weil bound we have
\begin{align*}
\Sigma \ll q^{-1/2}\sum_{\lambda =1}^{q-1} \left| \sum_{u\in \cU} \chi(u^\lambda)\right| \left|\sum_{v \in \cV} \chi(v^\lambda)\right|.
\end{align*}
Using the Cauchy-Schwarz inequality we obtain
\begin{align*}
\sum_{\lambda \in \F_q} \left| \sum_{u\in \cU} \chi(u^\lambda)\right| &\left|\sum_{v \in \cV} \chi(v^\lambda)\right| \\
&\le \left(\sum_{\lambda \in \F_q} \left| \sum_{u\in \cU} \chi(u^\lambda)\right|^2 \right)^{1/2} \left(\sum_{\lambda \in \F_q}  \left|\sum_{v \in \cV} \chi(v^\lambda)\right|^2 \right)^{1/2} \\
&\le (q^2UV)^{1/2}.
\end{align*}
\end{proof}

\begin{lemma} \label{lem:Jtimes}
Suppose $\cU, \cV, \cY, \cZ \subset \F_q$. For any rational function $f \in \F_q(X)$ of degree $k$ which is not of the form $f(X)=rg(X)^dX^{\lambda}$ where $d|q-1$ and $d\ge 2$, the number of solutions $J$ to the equation
\begin{align*}
f(uv)=yz \qquad (u,v,y,z)\in \cU\times\cV\times\cY\times\cZ
\end{align*}
satisfies the bound
\begin{align*}
J \le \frac{UVYZ}{q-1} + O_k((UVYZq)^{1/2}).
\end{align*}
\end{lemma}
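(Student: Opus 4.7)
The plan is to count $J$ by expanding the indicator $\mathbf{1}[f(uv)=yz]$ via multiplicative characters, in close parallel with the proof of Lemma~\ref{lem:Chartimes}. First I would peel off the contribution of tuples in which any of $u, v, y, z$ vanishes, or in which $f(uv)=0$. Since $f$ has at most $O_k(1)$ zeros and $\#\{(y,z)\in\cY\times\cZ : yz=a\}\le \min(Y,Z)\le\sqrt{YZ}$ for $a\neq 0$, all such degenerate tuples contribute at most $O_k(\sqrt{UVYZq})$ after a routine check, comfortably inside the claimed error.

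For the remaining tuples, multiplicative orthogonality gives
\[
\mathbf{1}[f(uv)=yz]=\frac{1}{q-1}\sum_{\psi\in\Psi^*}\psi(f(uv))\overline{\psi(yz)}.
\]
Setting $T_1(\psi)=\sum_{u,v}\psi(f(uv))$ and $T_2(\psi)=\bigl(\sum_{y}\psi(y)\bigr)\bigl(\sum_{z}\psi(z)\bigr)$, with $u, v, y, z$ running over nonzero elements and $f(uv)\neq 0$, the principal portion of $J$ becomes $\frac{1}{q-1}\sum_\psi T_1(\psi)T_2(\overline{\psi})$. The trivial character supplies the main term $UVYZ/(q-1)$, modulo corrections that are readily absorbed into $\sqrt{UVYZq}$. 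For each nontrivial $\psi$ I would prove $|T_1(\psi)|\ll_k \sqrt{UVq}$ by reproducing the scheme of Lemma~\ref{lem:Chartimes}: rewrite $T_1(\psi)=\sum_{x\neq 0,\,f(x)\neq 0}\psi(f(x))r(x)$ with $r(x)=\#\{(u,v):uv=x\}$, expand $r(x)$ by multiplicative orthogonality on $x$, interchange summations, and apply the Weil bound $\sum_x \psi(f(x))\overline{\eta}(x)=O_k(\sqrt{q})$, valid for every multiplicative $\eta$ and every nontrivial $\psi$ precisely because $f(X)\neq rg(X)^dX^\lambda$ for any $d\mid q-1$ with $d\ge 2$. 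Cauchy--Schwarz and Plancherel's identity then sum out the $\eta$-variable; a second application handles the sum over $\psi$ in $T_2$, yielding $\sum_{\psi\neq \psi_0}|T_2(\overline{\psi})|\le (q-1)\sqrt{YZ}$ and, after dividing by $q-1$, the error term $O_k(\sqrt{UVYZq})$.

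The main obstacle lies in the Weil step: making sure the bound $\sum_x \psi(f(x))\overline{\eta}(x)=O_k(\sqrt{q})$ truly applies for every pair $(\psi,\eta)$ of multiplicative characters with $\psi$ nontrivial. The exceptional configurations, in which $\psi(f(x))\overline{\eta}(x)$ collapses to a constant function on $\F_q^*$ (so that the sum is of size $q$ rather than $\sqrt{q}$), correspond exactly to $f$ having one of the forbidden shapes $rg(X)^dX^\lambda$ with $d\mid q-1$, $d\ge 2$. Carefully matching the exceptional pairs $(\psi,\eta)$ to the allowed values of $d$, and verifying that the hypothesis of the lemma rules out all of them simultaneously, is the most delicate point of the argument.
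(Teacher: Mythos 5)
Your proposal follows essentially the same route as the paper's proof: expand the indicator of $f(uv)=yz$ by multiplicative-character orthogonality, extract the main term $UVYZ/(q-1)$ from the trivial character, bound $\sum_{u,v}\chi(f(uv))$ by $O_k(\sqrt{UVq})$ via exactly the scheme of Lemma~\ref{lem:Chartimes} (which the paper simply invokes with the trivial additive character rather than re-deriving inline as you do), and finish with Cauchy--Schwarz over the nontrivial characters to bound the $y$- and $z$-sums by $(q-1)\sqrt{YZ}$, giving the error $O_k((UVYZq)^{1/2})$. Your added bookkeeping --- peeling off tuples with zero entries or $f(uv)=0$, and matching the exceptional pairs $(\psi,\eta)$ against the forbidden shapes $rg(X)^dX^{\lambda}$ --- is care the paper omits (it implicitly works with nonzero values, as the expressions $\chi(y^{-1})$ and $\chi(f(uv)(yz)^{-1})$ presuppose), and your identification of the Weil step as the point where the hypothesis on $f$ enters is precisely how the hypothesis of Lemma~\ref{lem:Chartimes} is used there.
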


\begin{proof}
Using the approximate orthogonality of multiplicative characters, we have
\begin{align*}
J \le \sum_{(u,v,y,z)\in \cU \times \cV \times \cY \times \cZ} \frac{1}{q-1} \sum_{\chi \in \cX} \chi(f(uv)(yz)^{-1}).
\end{align*}
Re-arranging and separating the contribution from the trivial character 
\begin{align*}
J - \frac{UVYZ}{q-1} \le \frac{1}{q-1} \sum_{\chi \in \cX^*} \left| \sum_{(u,v)\in \cU \times \cV} \chi(f(uv)) \right| \left| \sum_{y \in \cY} \chi(y^{-1})\right| \left| \sum_{z\in \cZ}\chi(z^{-1}) \right|.
\end{align*}
Now by Lemma \ref{lem:Chartimes} with the trivial additive character, we have
\begin{align*}
J - \frac{UVYZ}{q-1} &\ll_k \frac{\sqrt{UVq}}{q-1}  \sum_{\chi \in \cX^*} \left| \sum_{y \in \cY} \chi(y^{-1})\right| \left| \sum_{z\in \cZ}\chi(z^{-1}) \right| \\
& \ll_k \frac{\sqrt{UV}}{q^{1/2}} \cdot (q^2YZ)^{1/2}.
\end{align*}
This completes the proof.
\end{proof}

\begin{lemma} \label{lem:rtimes}
Let $\cA, \cS, \cU \subset \F^*_q$. Let $u >0$ be such that $r^\times_{\cS,\cA^{-1}}(x) \ge u$ for all $x \in \cU$. Let $k$ be a fixed positive integer and suppose also that
\begin{align*}
\tau \ge 2 \frac{kASU}{uq}.
\end{align*}
Then, for any rational function $f \in \F_q(X)$ of degree $k$ which is not of the form $f(X)=rg(X)^dX^{\lambda}$ where $d|p-1$ and $d\ge2$, we have
\begin{align*}
\# \{x \in \F_q : r^\times_{\cU}(f,x) \ge \tau \} \ll_k \frac{AUSq}{u^2\tau^2}.
\end{align*}
\end{lemma}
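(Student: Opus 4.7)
The plan is to bound $T = \#\cT$, where $\cT = \{x \in \F_q : r^\times_\cU(f,x) \ge \tau\}$, by counting the quantity
\[
N = \#\{(s,a,v,x) \in \cS \times \cA \times \cU \times \cT : f(s/a) f(v) = x\}
\]
in two ways. For the lower bound, fix $x \in \cT$ and a pair $(u_1,u_2) \in \cU^2$ contributing to $r^\times_\cU(f,x) \ge \tau$; the hypothesis supplies at least $u$ representations $u_1 = s/a$ with $(s,a) \in \cS \times \cA$, while $u_2$ is kept as $v$. Summing over the $\ge \tau$ pairs and over the $T$ heavy values of $x$ gives $N \ge T \tau u$.

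For the upper bound on $N$, I partition by $w = f(v)$: since $f$ has degree $k$, each fibre of $f$ restricted to $\cU$ has at most $k$ preimages, so
\[
N \le k \cdot \#\{(s,a,w,x) \in \cS \times \cA \times f(\cU) \times \cT : f(s a^{-1}) = x w^{-1}\}.
\]
Recognising this as a count of solutions $(s, a^{-1}, x, w^{-1}) \in \cS \times \cA^{-1} \times \cT \times f(\cU)^{-1}$ to the equation $f(s \cdot a^{-1}) = x \cdot w^{-1}$, the assumption on $f$ allows me to invoke Lemma \ref{lem:Jtimes} (with set cardinalities $S$, $A$, $T$ and at most $U$), which yields
\[
N \ll_k \frac{S A T U}{q-1} + \sqrt{S A T U q}.
\]

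Combining the two bounds gives $T \tau u \ll_k S A T U/(q-1) + \sqrt{SATUq}$. The hypothesis $\tau \ge 2 k A S U/(uq)$ is chosen precisely so that $\tau u$ exceeds the main term $kSAU/(q-1)$ by a constant factor; subtracting it from the left and squaring the remaining inequality $T \tau u \ll_k \sqrt{SATUq}$ leaves $T^2 \tau^2 u^2 \ll_k SATUq$, which rearranges to the desired $T \ll_k AUSq/(u^2 \tau^2)$. The decisive step, and the one I expect to be the main obstacle, is the structural rewrite in the upper bound: lifting only one of the two $\cU$-variables via the hypothesis (gaining a factor $u$ rather than $u^2$) and quotienting the other through $f$ (losing only a factor $k$) is exactly what puts the count into the $f(uv) = yz$ shape to which Lemma \ref{lem:Jtimes} can be applied; a more symmetric approach lifting both variables would produce an equation beyond the reach of that lemma, while attempting to use only the hypothesis without the $f$-fibre grouping would leave $f(u_1)f(u_2) = x$ in its original form, which is not directly amenable to the Weil-bound input.
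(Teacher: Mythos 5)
Your proposal is correct and follows essentially the same route as the paper: the paper likewise bounds $\tau u R$ below by the count of quadruples $(v,w,x,y)\in\cS\times\cA\times\cR\times\cU$ with $x=f(y)f(vw^{-1})$ (lifting one $\cU$-variable via $r^\times_{\cS,\cA^{-1}}\ge u$), passes to $f(\cU)$ at the cost of a factor $k$, applies Lemma~\ref{lem:Jtimes} to the resulting equation $f(vw^{-1})=x z^{-1}$, and uses the hypothesis on $\tau$ to absorb the main term before squaring. Your ``count $N$ two ways'' framing, including the remark on why only one variable is lifted and the other quotiented through $f$, is exactly the paper's argument in different packaging (and note the paper's displayed ``$\tau U R$'' is a typo for $\tau u R$, consistent with your version).
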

\begin{proof}
Our proof follows \cite[Lemma 2.3]{R-NSW} where here we replace $r_\cU(f,x)$ with $r^\times_\cU(f,x)$. Define
\begin{align*}
\cR = \{x \in \F_q : r^\times_\cU(f,x)\ge \tau\}.
\end{align*}
Clearly,
\begin{align*}
\tau R \le \sum_{x \in \cR} r_\cU(f,x) = \{(x,y,z)\in \cR\times \cU \times \cU: x=f(y)f(z)\}.
\end{align*}
Now $r_{\cS,\cA^{-1}}(z) \ge u$ for $z\in \cU$, hence
\begin{align*}
&\# \{(x,y,z)\in \cR\times \cU \times \cU:x=f(y)+f(z)\}\\
&\qquad \qquad \le u^{-1}\# \{(v,w,x,y)\in \cS\times \cA\times \cR \times \cU:x=f(y)f(vw^{-1})\}.
\end{align*}
Therefore, we have
\begin{align*}
\tau U R &\le \# \{(v,w,x,y)\in \cS\times \cA\times \cR \times \cU:x=f(y)f(vw^{-1})\}\\
&\le k\cdot  \# \{(v,w,x,z)\in \cS\times \cA\times \cR \times f(\cU):x=zf(vw^{-1})\}.
\end{align*}
We then apply Lemma \ref{lem:Jtimes} to obtain
\begin{align*}
\tau U R  \le \frac{kARSU}{q} + O_k((ARSUq)^{1/2}).
\end{align*}
The assumed lower bound on $\tau$ implies
\begin{align*}
\tau U R  \ll_k (ARSUq)^{1/2}.
\end{align*}
This concludes the proof.
\end{proof}
\begin{lemma} \label{lem:Etimes}
Let $\cA_1, \dots, \cA_n \subset \F^*_q$. Then
\begin{align*}
E^\times \(\bigcup_{i=1}^n \cA_i\) \le \(\sum_{n=1}^n E^\times (\cA_i)^{1/4} \) ^4.
\end{align*}
\end{lemma}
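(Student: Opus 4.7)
The plan is to reduce the claim to Minkowski's inequality applied to the $L^2$-representation of multiplicative energy. Write $\cB = \bigcup_i \cA_i$. For any subsets $\cC, \cD \subset \F_q^*$, define the representation functions $r^\times_\cC(x) = \#\{(u,v)\in \cC^2 : uv = x\}$ and $r^\times_{\cC,\cD}(x) = \#\{(u,v)\in \cC\times\cD : uv = x\}$, so that $E^\times(\cC) = \sum_{x} r^\times_\cC(x)^2 = \|r^\times_\cC\|_2^2$.

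First I would observe that since $1_\cB(u) \le \sum_i 1_{\cA_i}(u)$ for every $u \in \F_q^*$ (regardless of whether the $\cA_i$ are disjoint), one has the pointwise bound
\begin{equation*}
r^\times_\cB(x) \;\le\; \sum_{i,j=1}^n r^\times_{\cA_i,\cA_j}(x).
\end{equation*}
Taking $\ell^2$-norms in $x$ and applying Minkowski's inequality (the triangle inequality for $\|\cdot\|_2$) gives
\begin{equation*}
E^\times(\cB)^{1/2} \;=\; \|r^\times_\cB\|_2 \;\le\; \sum_{i,j=1}^n \|r^\times_{\cA_i,\cA_j}\|_2.
\end{equation*}

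Next I would bound the mixed quantity $\|r^\times_{\cA_i,\cA_j}\|_2^2$, which counts quadruples $(a,a',b,b')\in \cA_i\times\cA_i\times\cA_j\times\cA_j$ with $ab = a'b'$. Parametrising by the common ratio $t = a/a' = b'/b$ and writing $T_i(t) = \#\{(a,a')\in \cA_i^2 : a = ta'\}$ and $T_j(t) = \#\{(b,b')\in \cA_j^2 : b' = tb\}$, one has $\|r^\times_{\cA_i,\cA_j}\|_2^2 = \sum_t T_i(t)T_j(t)$. Noting that $\sum_t T_i(t)^2 = E^\times(\cA_i)$ (and similarly for $j$), the Cauchy--Schwarz inequality yields
\begin{equation*}
\|r^\times_{\cA_i,\cA_j}\|_2^2 \;\le\; E^\times(\cA_i)^{1/2}\,E^\times(\cA_j)^{1/2}.
\end{equation*}

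Substituting this into the Minkowski bound produces
\begin{equation*}
E^\times(\cB)^{1/2} \;\le\; \sum_{i,j=1}^n E^\times(\cA_i)^{1/4}E^\times(\cA_j)^{1/4} \;=\; \left(\sum_{i=1}^n E^\times(\cA_i)^{1/4}\right)^{2},
\end{equation*}
and squaring delivers the claimed inequality. No single step looks like a serious obstacle here; the only real content is the recognition that $E^\times(\cdot)^{1/4}$ behaves as a norm on the representation functions, which then forces the $\ell^4$-type shape of the bound. The mixed-energy Cauchy--Schwarz step is the one requiring the most care, but it is a standard manipulation and is essentially identical to the analogous additive statement.
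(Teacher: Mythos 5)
Your proposal is correct and takes essentially the same route as the paper: your Minkowski step is just the paper's expansion of $E^\times\bigl(\bigcup_i \cA_i\bigr)$ into the quadruple sum $\sum_{i,j,k,\ell}\sum_x r^\times_{\cA_i,\cA_j}(x)\,r^\times_{\cA_k,\cA_\ell}(x)$ followed by Cauchy--Schwarz, and your ratio-parametrisation bound $\sum_x r^\times_{\cA_i,\cA_j}(x)^2 \le E^\times(\cA_i)^{1/2}E^\times(\cA_j)^{1/2}$ is exactly the paper's identity $\sum_x r^\times_{\cA_i,\cA_j}(x)^2 = \sum_x r^\times_{\cA_i,\cA_i^{-1}}(x)\,r^\times_{\cA_j,\cA_j^{-1}}(x)$ combined with a second application of Cauchy--Schwarz. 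A minor bonus: your pointwise domination $r^\times_{\cB}(x) \le \sum_{i,j} r^\times_{\cA_i,\cA_j}(x)$ handles non-disjoint sets directly, whereas the paper's proof assumes disjointness at the outset.
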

\begin{proof}
We assume the sets $\cA_i, \dots, \cA_n$ are disjoint. Then using the Cauchy-Schwarz inequality twice we have,
\begin{align*}
E^\times \left(\bigcup_{i=1}^n \cA_i \right) &= \sum_{i,j,k,\ell =1}^{n} \sum_{x \in \F_q} r^\times_{\cA_i,\cA_j}(x)r^\times_{\cA_k,\cA_\ell}(x) \\
&\le \sum_{i,j,k,\ell =1}^{n} \left(\sum_{x \in \F_q} r^\times_{\cA_i,\cA_j}(x)^2\right)^{1/2}\left(\sum_{x \in \F_q} r^\times_{\cA_k,\cA_\ell}(x)^2\right)^{1/2}\\
&= \left(\sum_{i,j =1}^{n} \left(\sum_{x \in \F_q} r^\times_{\cA_i,\cA_j}(x)^2\right)^{1/2} \right)^2\\
&=\left(\sum_{i,j =1}^{n} \left(\sum_{x \in \F_q} r^\times_{\cA_i,\cA_i^{-1}}(x) r^\times_{\cA_j,\cA_j^{-1}}(x)\right)^{1/2} \right)^2\\
&\le \left(\sum_{i,j =1}^{n} \left(\sum_{x \in \F_q} r^\times_{\cA_i,\cA_i^{-1}}(x)^2\right)^{1/4} \left( \sum_{x \in \F_q} r^\times_{\cA_j,\cA_j^{-1}}(x)^2\right)^{1/4} \right)^2\\
&= \left(\sum_{i =1}^{n} \left(\sum_{x \in \F_q} r^\times_{\cA_i,\cA_i^{-1}}(x)^2\right)^{1/4} \right)^4 = \left(\sum_{i=1}^{n} E^\times(\cA_i)^{1/4} \right)^4.
\end{align*}
This concludes the proof.
\end{proof}
%
%
%
%
%
%

\begin{lemma} \label{lem:UEfU}
Let $\cA \subset \F_q$. Then for any rational  function $f \in \F_q(X)$ of degree $k$ which is not of the form $f(X)=rg(X)^dX^{\lambda}$ where $d|p-1$ and $d\ge2$, there exists $\cU \subset \cA$ of cardinality $U$ such that 
\begin{align*}
U \gg \frac{E^\times(\cA)^{1/2}}{A^{1/2}(\log A)^{7/4}}
\end{align*}
and
\begin{align*}
E^\times(f(\cU)) \ll_k \frac{AU^6q^{-1}(\log A)^{11/2}+AU^3q(\log A)^6}{E^\times(\cA)}.
\end{align*}
\end{lemma}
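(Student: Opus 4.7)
The plan is to mimic the argument for the additive analogue in \cite{R-NSW}, replacing every use of additive characters and additive energy by the multiplicative analogues, and relying on the tools built up so far, namely Lemmas \ref{lem:Chartimes}--\ref{lem:Etimes}. The structure is a nested pair of dyadic decompositions to select $\cU$, followed by an application of the collision estimate of Lemma \ref{lem:rtimes}, followed by one more dyadic decomposition to pass from that collision estimate to an energy bound.

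First I would decompose $r^\times_\cA$ dyadically: set $\cS_j = \{s \in \F_q^* : 2^j \le r^\times_\cA(s) < 2^{j+1}\}$ for $j = 0, 1, \dots$, of which $O(\log A)$ are non-empty. Since $E^\times(\cA) = \sum_s r^\times_\cA(s)^2 \asymp \sum_j 2^{2j}|\cS_j|$, pigeonhole produces $j_0$ such that, writing $\cS = \cS_{j_0}$, $u = 2^{j_0}$ and $S = |\cS|$, one has $u^2 S \gg E^\times(\cA)/\log A$. Next, since $\sum_{a \in \cA} r^\times_{\cS,\cA^{-1}}(a) = \sum_{s \in \cS} r^\times_\cA(s) \ge uS$, a second dyadic pigeonhole on $r^\times_{\cS,\cA^{-1}}$ restricted to $\cA$ produces $\cU \subset \cA$ and a threshold $t$ with $r^\times_{\cS,\cA^{-1}}(a) \ge t$ for every $a \in \cU$ and $tU \gg uS/\log A$. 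Combining these two inequalities with the trivial constraints $t \le S \le A^2/u$ and $t \le A$ yields the lower bound $U \gg E^\times(\cA)^{1/2}/(A^{1/2}(\log A)^{7/4})$.

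With these choices I would apply Lemma \ref{lem:rtimes} to get, for every $\tau \ge \tau_0 := 2kASU/(tq)$,
\[
\#\{x \in \F_q : r^\times_\cU(f,x) \ge \tau\} \ll_k \frac{AUSq}{t^2 \tau^2}.
\]
To estimate $E^\times(f(\cU))$ I use $E^\times(f(\cU)) \ll_k \sum_x r^\times_\cU(f,x)^2$, valid since the fibres of $f$ have size at most $k$, and dyadically decompose this sum, splitting at $\tau_0$. Summing the collision estimate over the $O(\log U)$ dyadic levels above $\tau_0$ gives $O_k(AUSq\log U / t^2)$, while the contribution below $\tau_0$ is at most $\tau_0 \sum_x r^\times_\cU(f,x) = \tau_0 U^2 = 2kASU^3/(tq)$. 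Summing,
\[
E^\times(f(\cU)) \ll_k \frac{ASU^3}{tq} + \frac{AUSq \log U}{t^2},
\]
and substituting $u^2 S \gg E^\times(\cA)/\log A$ and $tU \gg uS/\log A$ to eliminate $u, S, t$ is intended to produce the stated bound.

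The main obstacle is the bookkeeping of logarithmic factors in the final substitution: three sources of $\log$-loss (the two dyadic pigeonholes and the dyadic summation in the energy estimate) must combine with the trivial bounds $uS \le A^2$, $t \le \min(S,A)$ to yield exactly the exponents $(\log A)^{7/4}$ on $U$ and $(\log A)^{11/2}$, $(\log A)^6$ on the two terms of the energy bound. Extracting these precise exponents requires careful elimination of the auxiliary parameters using the inequalities $u^2 S \gg E^\times(\cA)/\log A$, $tU \gg uS/\log A$, and the trivial constraints, which is where most of the computational work lies.
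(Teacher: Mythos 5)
Your overall skeleton (dyadic level sets of $r^\times_\cA$, selection of a popular level $\cS$ with $\rho^2 S \gg E^\times(\cA)/\log A$, application of Lemma \ref{lem:rtimes} over dyadic ranges of $r^\times_\cU(f,\cdot)$ split at $\tau_0 = 2kASU/(tq)$, then elimination of the auxiliary parameters) matches the paper's proof, and your treatment of the energy sum itself is correct and essentially identical to the paper's \eqref{eq:R0}--\eqref{eq:Etimes2}. However, there is a genuine gap in how you select $\cU$. You construct $\cU$ by a single dyadic pigeonhole on $r^\times_{\cS,\cA^{-1}}$ over $\cA$, which yields $tU \gg \rho S/\log A$ (writing $\rho$ for your first-level multiplicity, to avoid the clash with the $u$ of Lemma \ref{lem:rtimes}) together with the trivial bounds $t \le \min(S,A)$. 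The paper's proof needs one more inequality that your construction does not provide: $U \gg t/(\log A)^{1/2}$, its \eqref{eq:U>}. To secure it, the paper works with the point set $\cP = \{(a,b)\in\cA\times\cA : ab \in \cS^\times\}$, decomposes it by columns to get $\cV$ with threshold $s$, and splits into cases: if $V \ge s/(\log A)^{1/2}$ it takes $\cU=\cV$, and otherwise it performs a \emph{third} dyadic decomposition by rows of $\cQ = \cP\cap(\cV\times\cA)$, using $t \le V$ to force $W \gg t/(\log A)^{1/2}$. This extra inequality is exactly what powers the derivation of \eqref{eq:S/u} via \eqref{eq:U2>}, i.e.\ the first term of the energy bound.

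Without it your elimination provably cannot close for the first term. Indeed, since each $s\in\cS$ satisfies $r^\times_\cA(s) < 2\rho$, you also have $tU \le \sum_{a\in\cA} r^\times_{\cS,\cA^{-1}}(a) \le 2\rho S$, so $tU \asymp \rho S$ up to logarithms and hence $S/t \asymp U/\rho$. The needed bound $S/t \ll U^3(\log A)^{11/2}/E^\times(\cA)$ is therefore equivalent to $E^\times(\cA) \ll \rho U^2 (\log A)^{O(1)}$, whereas your constraints only give $E^\times(\cA) \ll \rho^2 S \log A \asymp \rho\, t U (\log A)^{O(1)}$; these coincide precisely when $t \ll U(\log A)^{O(1)}$, which is the paper's \eqref{eq:U>} and is not guaranteed by a bare pigeonhole (the popular level could consist of few elements of huge multiplicity). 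Concretely, the configuration $\rho = A^{1/2}$, $S = A$, $t = A$, $U = A^{1/2}$, $E^\times(\cA) \asymp A^2$ satisfies every inequality you list, yet gives $ASU^3/(tq) \asymp A^{5/2}/q$ against the claimed $AU^6(\log A)^{11/2}/(qE^\times(\cA)) \asymp A^2/q$, a loss of $A^{1/2}$. Note that your lower bound on $U$ and the second term ($AU^3 q$) do follow from your relations alone --- the second-term elimination is exactly the paper's \eqref{eq:S/u2}, which only squares $tU \gg \rho S/\log A$ --- so the repair needed is precisely the paper's column/row case analysis, not a global restructuring.
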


\begin{proof}
Clearly,
\begin{align*}
E^\times (\cA) = \sum_{x \in \cA\cdot \cA}r^\times_{\cA}(x)^2.
\end{align*}
We dyadically decompose this sum and define the set
\begin{align*}
\cS^\times = \{x\in \cA \cdot \cA : \rho \le r^\times_\cA(x) < 2\rho \}
\end{align*}
with some integer $1 \le \rho \le A$ where $\rho$ is a power of 2, and such that
\begin{align} \label{eq:rho}
\rho^2 S \gg \frac{E^\times(\cA)}{\log A}.
\end{align}
Consider
\begin{align*}
\cP = \{(a,b) \in \cA \times \cA: ab\in \cS^\times \}.
\end{align*}
Now we have
\begin{align} \label{eq:rhoPS}
\rho S\le P < 2\rho S.
\end{align}
We then make another dyadic decomposition of $\cS$ to find a large subset supported on vertical lines. That is, we define
\begin{align*}
\cA_x = \{ y :(x,y) \in \cP\}.
\end{align*}
Therefore, for some $s$ there exists a dyadic set
\begin{align*}
\cV = \{x \in \cA:s\le \cA_x<2s\}
\end{align*}
such that
\begin{align}\label{eq:Vs}
Vs \gg \frac{P}{\log A}\gg \frac{\rho S}{\log A}.
\end{align}
We now separate into two cases. First, suppose 
\begin{align*}
V \ge \frac{s}{(\log A)^{1/2}}.
\end{align*}
Then for any $x \in \cV$, there exist
\begin{align*}
y_1, y_2, \dots, y_s \in \cA_x \subset \cA
\end{align*}
such that $(x,y_i) \in \cP$ for all $1 \le i \le s$. Therefore
\begin{align*}
xy_1,xy_2, \dots, xy_s \in \cS^\times.
\end{align*}
It follows that $r^\times_{\cS^\times, \cA^{-1}}(x) \ge s$ for every $x \in \cV$ and in this case we define
\begin{align} \label{eq:U=Vu=s}
\cU = \cV \qquad \text{and} \qquad u=s.
\end{align}
Now suppose
\begin{align*}
V < \frac{s}{(\log A)^{1/2}}.
\end{align*}
We now consider the point set
\begin{align*}
\cQ = \{ (x,y) \in \cP : x \in \cV\}.
\end{align*}
As before, for any $x \in \cV$ there exist at least $s$ values of $y \in \cA_x \subset \cA$ with $(x,y) \in \cP$. Hence $Q \ge Vs$.

For any $y \in \F_q$ we define 
\begin{align*}
\cB_y = \{x : (x,y) \in \cQ.
\end{align*}
Clearly,
\begin{align*}
\sum_{y \in \cA}B_y = Q.
\end{align*}
Therefore, for some $t$ there exists a dyadic set
\begin{align*}
\cW = \{y \in \cA : t \le B_y < 2t\}
\end{align*}
such that
\begin{align} \label{eq:Wt}
Wt \gg \frac{Q}{\log A} \ge \frac{Vs}{\log A}.
\end{align}
Now since $\cQ \subset \cV \times \cA$ we also have $t \le V$. From \eqref{eq:Wt} and our assumption on $s$ we have
\begin{align*}
WV \ge Wt \gg \frac{Vs}{\log A} > \frac{V^2}{(\log A)^{1/2}}
\end{align*}
hence
\begin{align} \label{eq:W}
W \gg \frac{V}{(\log A)^{1/2}} \ge \frac{t}{(\log A)^{1/2}}.
\end{align}
Now, by \eqref{eq:Wt} and \eqref{eq:Vs}
\begin{align} \label{eq:Wtrho}
Wt \gg  \frac{Vs}{\log A} \gg  \frac{\rho S}{(\log A)^2}.
\end{align}
Now, let $y \in \cW$. Then there exist $x_1, \dots, x_t \in \cA$ such that $(x_i,y)\in \cP$ for all $1 \le i \le t$. Therefore,
\begin{align*}
x_1y, \dots, x_ty \in \cS.
\end{align*}
Then $r^\times_{\cS,\cA^{-1}}(y)\ge t$ for every $y \in \cW$.

We then take 
\begin{align} \label{eq:cUcW}
\cU =\cW \qquad \text{and} \qquad u=t.
\end{align}
It is clear for both \eqref{eq:U=Vu=s} and \eqref{eq:cUcW} we have $\cU \subset \cA$, \begin{align} \label{eq:U>}
U \gg \frac{u}{(\log A)^{1/2}}
\end{align}
and
\begin{align} \label{eq:uU>}
uU \gg \frac{\rho S}{(\log A)^2}
\end{align}
where $r^\times_{\cS,\cA^{-1}}(x)\ge u$ for all $x \in \cU$. Multiplying \eqref{eq:U>} and \eqref{eq:uU>} and using \eqref{eq:rho} we obtain
\begin{align} \label{eq:U2>}
U^2 \gg \frac{\rho S}{(\log A)^{5/2}} \gg \frac{E^\times(\cA)}{A (\log A)^{7/2}}.
\end{align}
We now need a bound on $E^\times (f(\cU))$. We have
\begin{align} \label{eq:EfU}
E^\times (f(\cU)) = \sum_{x \in \F_q} r_{f(\cU)}^\times(x)^2 \le \sum_{x \in \F_q}r_\cU^\times(f,x)^2.
\end{align}
We define the set
\begin{align*}
\cR_0= \left\{x \in \F_q : r_\cU^\times (f,x) \le 2\frac{kASU}{uq}\right\}
\end{align*}
and for $J=\lceil \log A/ \log2 \rceil$, we define the sets
\begin{align*}
\cR_j = \left\{ x \in \F_q : 2^j\frac{kASU}{uq} < r_\cU^\times(f,x) \le 2^{j+1}\frac{AkSU}{uq} \right\}, \ j=1,\dots,J.
\end{align*}
Since,
\begin{align*}
 \sum_{x \in \F_q} r_\cU^\times(f,x) = U^2
\end{align*}
we have
\begin{align}\label{eq:R0}
\sum_{x \in \R_0}r_\cU^\times(f,x)^2 \le 2\frac{kASU}{uq}  \sum_{x \in \F_q}r_\cU^\times(f,x) \ll \frac{kASU^3}{uq}.
\end{align}
For $i=1, \dots, J$, we apply Lemma \ref{lem:rtimes} with 
\begin{align*}
\tau = 2^j\frac{AkSU}{uq}
\end{align*}
to obtain
\begin{align} \label{eq:Rj}
\sum_{x \in \R_j}r_\cU^\times(f,x)^2 \le (2\tau)^2 R_j \ll_k \frac{ASUq}{u^2}.
\end{align}
Combining \eqref{eq:R0} and \eqref{eq:Rj} we get
\begin{align} \label{eq:Etimes2}
E^\times (f(\cU)) \ll_k  \frac{ASU^3}{uq}+ \frac{ASUq}{u^2}\log A.
\end{align}

Now, multiplying \eqref{eq:uU>} with \eqref{eq:U2>} and applying \eqref{eq:rho}, we obtain
\begin{align*}
uU^3 \gg \frac{\rho^2S^2}{(\log A)^{9/2}} \gg \frac{SE^\times(\cA)}{(\log A)^{11/2}}
\end{align*}
which gives
\begin{align} \label{eq:S/u}
\frac{S}{u} \ll \frac{U^3 (\log A)^{11/2}}{E^\times(\cA)}.
\end{align}
Also, squaring \eqref{eq:uU>} and applying \eqref{eq:rho}
\begin{align*}
u^2U^2 \gg \frac{\rho^2S^2}{(\log A)^4} \gg \frac{SE^\times(\cA)}{(\log A)^5}
\end{align*}
which gives
\begin{align}\label{eq:S/u2}
\frac{S}{u^2} \ll \frac{U^2(\log A)^5}{E^\times(\cA)}.
\end{align}
Applying \eqref{eq:S/u} and \eqref{eq:S/u2} into the first and second terms of \eqref{eq:Etimes2} respectively we obtain
\begin{align*}
E^\times(f(\cU)) \ll_k \frac{AU^6q^{-1}(\log A)^{11/2}+AU^3q(\log A)^6}{E^\times(\cA)}.
\end{align*}
This concludes the proof.
\end{proof}

\begin{cor}\label{cor:ExE+}
Let $\cA \subset \F_q$. Then for any rational  function $f \in \F_q(X)$ of degree $k$ which is not of the form $f(X)=g(X^p)-g(X)+\lambda X+\mu$, there exists $\cU \subset \cA$ of cardinality $U$ such that 
\begin{align*}
U \gg \frac{E^\times(\cA)^{1/2}}{A^{1/2}(\log A)^{7/4}}
\end{align*}
and
\begin{align*}
E^+(f(\cU)) \ll_k \frac{AU^6q^{-1}(\log A)^{11/2}+AU^3q(\log A)^6}{E^\times(\cA)}.
\end{align*}
\end{cor}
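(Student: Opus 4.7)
The proof should run in parallel to Lemma \ref{lem:UEfU}: the construction of the subset $\cU$ is driven entirely by the multiplicative structure of $\cA$ (i.e.\ by $E^\times(\cA)$), and only the final bound on $E^+(f(\cU))$ requires a different character-sum input. So the plan is to reuse verbatim the dyadic-pigeonhole construction from the proof of Lemma \ref{lem:UEfU}: dyadically decompose $E^\times(\cA) = \sum_x r^\times_\cA(x)^2$ to produce the level set $\cS^\times$ with $\rho^2 S \gg E^\times(\cA)/\log A$; pigeonhole on the fibres $\cA_x$ of $\cP = \{(a,b)\in \cA\times\cA:\ ab\in\cS^\times\}$ to obtain the set $\cV$; and then split into the two cases $V \ge s/(\log A)^{1/2}$ and $V < s/(\log A)^{1/2}$ to extract $\cU\subset \cA$ and $u>0$ with $r^\times_{\cS^\times,\cA^{-1}}(x)\ge u$ for all $x\in \cU$, together with the same estimates
\begin{align*}
U \gg \frac{u}{(\log A)^{1/2}}, \qquad uU \gg \frac{\rho S}{(\log A)^2}, \qquad U^2 \gg \frac{E^\times(\cA)}{A(\log A)^{7/2}}.
\end{align*}
None of this depends on which energy of $f(\cU)$ we intend to bound.

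The new ingredient is a hybrid analogue of Lemma \ref{lem:rtimes} bounding the additive representation function $r^+_\cU(f,x)$ under the same multiplicative hypothesis on $\cU$: for $\tau \ge 2kASU/(uq)$ one should obtain
\begin{align*}
\#\{x\in \F_q :\ r^+_\cU(f,x)\ge \tau\} \ll_k \frac{AUSq}{u^2\tau^2}.
\end{align*}
The derivation is the same as in Lemma \ref{lem:rtimes}, except that the final step reduces (via $r^\times_{\cS,\cA^{-1}}(z)\ge u$ and a $k$-to-one substitution $z\mapsto f(z)$) to counting solutions of $x = z + f(vw^{-1})$ with $(v,w,x,z)\in \cS\times\cA\times \cR\times f(\cU)$. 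This is a hybrid analogue of Lemma \ref{lem:Jtimes} and expands by \emph{additive} character orthogonality, reducing matters to bounding $\sum_{u\in\cU}\sum_{v\in\cV}\psi(f(uv))$ for non-trivial additive $\psi$. Introducing multiplicative character orthogonality to detect $uv = x$ exactly as in the proof of Lemma \ref{lem:Chartimes} and then applying the Weil bound on $\sum_x \psi(f(x))\chi(x^{-1})$ yields $\ll_k\sqrt{UVq}$, provided the Weil hypothesis holds; this is precisely what the exclusion $f(X)\neq g(X^p)-g(X)+\lambda X+\mu$ guarantees, since that Artin--Schreier form is the obstruction to non-trivial additive Weil bounds.

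Granted the hybrid versions of Lemmas \ref{lem:Chartimes}--\ref{lem:rtimes}, the remainder of the argument is a verbatim transcription of the last part of the proof of Lemma \ref{lem:UEfU}: write $E^+(f(\cU)) \le \sum_x r^+_\cU(f,x)^2$, split $\F_q$ into the dyadic level sets $\cR_0, \cR_1, \dots, \cR_J$ with $J = \lceil \log A/\log 2\rceil$, use the trivial bound on $\cR_0$ via $\sum_x r^+_\cU(f,x) = U^2$, apply the new hybrid level-set bound on each $\cR_j$, and combine to obtain
\begin{align*}
E^+(f(\cU)) \ll_k \frac{ASU^3}{uq} + \frac{ASUq}{u^2}\log A.
\end{align*}
Inserting the already-established bounds \eqref{eq:S/u} and \eqref{eq:S/u2} on $S/u$ and $S/u^2$ gives the claimed estimate. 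The main technical obstacle is therefore the Weil bound for mixed additive/multiplicative character sums of the form $\sum_x \psi(f(x))\chi(x^{-1})$, together with checking that the exceptional forms for this bound are exactly those excluded in the hypothesis; everything else is bookkeeping carried over from the multiplicative case.
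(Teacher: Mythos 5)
Your proposal is correct and matches the paper's intended argument: the paper's own proof is a one-line deferral (``follow the proof of Lemma~\ref{lem:UEfU}, replacing $E^\times$ with $E^+$ in \eqref{eq:EfU}, using the analogues from \cite[Equation 2.12]{R-NSW}''), and what you have written out --- keeping the multiplicative dyadic construction of $\cU$ unchanged, then building the hybrid versions of Lemmas~\ref{lem:Chartimes}--\ref{lem:rtimes} with additive characters on $f(uv)$, the Weil bound for $\sum_x \psi(f(x))\chi(x^{-1})$ under the Artin--Schreier exclusion, and the same $\cR_0,\dots,\cR_J$ bookkeeping --- is exactly the argument that one-liner compresses. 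Indeed your write-up is more explicit than the paper itself about the key point that a genuinely hybrid level-set lemma (multiplicative hypothesis $r^\times_{\cS,\cA^{-1}}(z)\ge u$, additive representation function $r^+_\cU(f,x)$) is what must be supplied.
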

\begin{proof}
We follow the proof of Lemma \ref{lem:UEfU}, however we replace $E^\times$ with $E^+$ in \eqref{eq:EfU}, and then use analogous results following from \cite[Equation 2.12]{R-NSW}.
\end{proof}

\begin{cor}\label{cor:E+Ex}
Let $\cA \subset \F_q$. Then for any rational  function $f \in \F_q(X)$ of degree $k$ which is not of the form $f(X)=rg(x)^dx^{\lambda}$ where $d|p-1$ and $d\ge2$, there exists $\cU \subset \cA$ of cardinality $U$ such that 
\begin{align*}
U \gg \frac{E^+(\cA)^{1/2}}{A^{1/2}(\log A)^{7/4}}
\end{align*}
and
\begin{align*}
E^\times(f(\cU)) \ll_k \frac{AU^6q^{-1}(\log A)^{11/2}+AU^3q(\log A)^6}{E^+(\cA)}.
\end{align*}
\end{cor}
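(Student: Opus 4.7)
The plan is to run the proof of Lemma \ref{lem:UEfU} with the roles of addition and multiplication swapped at the input while keeping the output multiplicative. Step one replaces the dyadic decomposition of $E^\times(\cA) = \sum_x r^\times_\cA(x)^2$ by that of $E^+(\cA) = \sum_x r^+_\cA(x)^2$. I set
\begin{align*}
\cS^+ = \{x \in \cA + \cA : \rho \le r^+_\cA(x) < 2\rho\}
\end{align*}
with $\rho$ a dyadic parameter satisfying $\rho^2 S \gg E^+(\cA)/\log A$, and then perform the row/column pigeonholing on $\cP = \{(a,b) \in \cA \times \cA : a + b \in \cS^+\}$ exactly as in the proof of Lemma \ref{lem:UEfU}, but with sums in place of products (this is the argument of \cite[proof of Theorem 1.1]{R-NSW}). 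This produces $\cU \subset \cA$ satisfying $U \gg u/(\log A)^{1/2}$, $uU \gg \rho S/(\log A)^2$, and the additive representation $r^+_{\cS^+,-\cA}(x) \ge u$ for all $x \in \cU$, which combine to give $U \gg E^+(\cA)^{1/2}/(A^{1/2}(\log A)^{7/4})$.

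Step two starts from $E^\times(f(\cU)) \le \sum_x r^\times_\cU(f,x)^2$, dyadically decomposes the set $\{x \in \F_q : r^\times_\cU(f,x) \ge \tau\}$, and runs the estimates \eqref{eq:R0}--\eqref{eq:Etimes2} with only cosmetic changes. The essential new ingredient is a hybrid version of Lemma \ref{lem:rtimes}: under the \emph{additive} hypothesis $r^+_{\cS,-\cA}(x) \ge u$ on $\cU$, the bound $\#\{x \in \F_q : r^\times_\cU(f,x) \ge \tau\} \ll_k AUSq/(u^2 \tau^2)$. I would prove this by copying the proof of Lemma \ref{lem:rtimes}, substituting the additive representation $z = v - w$ in place of $z = vw^{-1}$; this in turn reduces matters to a hybrid of Lemma \ref{lem:Jtimes} counting solutions to $f(v-w) = yz$ with $(v,w,y,z) \in \cS \times \cA \times \cY \times \cZ$, obtained by multiplicative character orthogonality followed by the hybrid character sum estimate $\sum_{v \in \cV, w \in \cW} \psi(f(v-w)) \ll_k \sqrt{VWq}$ for nontrivial multiplicative $\psi$.

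The main obstacle I anticipate is this last mixed character sum estimate, which applies a multiplicative character to an additive argument. However, it is the direct multiplicative dual of Lemma \ref{lem:Chartimes}: I would open the constraint $v - w = x$ with additive characters and apply the Weil bound to the resulting one-variable sum involving $\psi(f(x))$, the exclusion $f(X) \ne rg(X)^d X^\lambda$ with $d \mid p - 1$ and $d \ge 2$ being precisely what is needed for nontriviality. Once this hybrid lemma is in place, the final algebraic manipulation of \eqref{eq:S/u} and \eqref{eq:S/u2} with $E^+(\cA)$ in place of $E^\times(\cA)$ delivers the stated bound.
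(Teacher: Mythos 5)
Your proposal is correct and is essentially the paper's own argument: the paper proves this corollary by running the additive pigeonholing of \cite[Lemma 2.5]{R-NSW} (your step one), switching to $E^\times(f(\cU)) \le \sum_{x} r^\times_\cU(f,x)^2$ at their equation (2.12), and then proceeding as in Lemma \ref{lem:UEfU}, which forces exactly the hybrid analogues of Lemmas \ref{lem:rtimes}, \ref{lem:Jtimes} and \ref{lem:Chartimes} that you spell out. Your mixed estimate $\sum_{v,w}\psi(f(v-w)) \ll_k \sqrt{VWq}$ for nontrivial multiplicative $\psi$, obtained by additive orthogonality plus the Weil bound, is precisely the intended dual of Lemma \ref{lem:Chartimes} (and is available among the convolution character sum bounds of \cite{R-NSW}), so your write-up simply makes explicit the details the paper compresses into a one-line citation.
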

\begin{proof}
We follow the proof of \cite[Lemma 2.5]{R-NSW}, however we replace $E^+$ with $E^\times$ in equation (2.12) and the proceed as in our Lemma \ref{lem:UEfU}.
\end{proof}

\subsection{Proofs of Theorems \ref{thm:E(S)Ef(T)}, \ref{thm:ExE+} and \ref{thm:E+Ex}}
\begin{proof}
The proofs follow that of \cite[Theorem 1.1]{R-NSW}, but for Theorem \ref{thm:E(S)Ef(T)}  our new multiplicative results from Lemma \ref{lem:UEfU} are used in place of additive results. For Theorems \ref{thm:ExE+} and \ref{thm:E+Ex} our mixed results from Corollaries \ref{cor:ExE+} and \ref{cor:E+Ex} are used respectively.
\end{proof}

\end{document}